\documentclass[a4paper,11pt,twoside]{amsart}
\usepackage[english]{babel}
\usepackage[utf8]{inputenc}

\usepackage[margin=2.5cm]{geometry}
\usepackage{enumitem}

\usepackage{bold-extra}
\usepackage{ mathrsfs }

\usepackage{graphics}
\usepackage{aliascnt}
\usepackage[pdftex,citecolor=green,linkcolor=red]{hyperref}

\usepackage{amsmath}
\usepackage{amsfonts}
\usepackage{amssymb}
\usepackage{amsthm}
\usepackage{mathtools}

\newtheorem{theorem}{Theorem}[section]
\newtheorem{corollary}[theorem]{Corollary}

\newtheorem{lemma}[theorem]{Lemma}

\newtheorem*{proposition*}{Proposition}

\theoremstyle{definition}

\newtheorem{remark}[theorem]{Remark}

\numberwithin{equation}{section}
\newcommand\eps{\varepsilon}

\newcommand\N{\mathbb{N}}

\DeclareMathOperator\rad{rad}

\renewcommand{\leq}{\leqslant}
\renewcommand{\le}{\leqslant}

\renewcommand{\ge}{\geqslant}

\usepackage[usenames,dvipsnames]{color}

\setlength\evensidemargin\oddsidemargin

\makeatletter
\@namedef{subjclassname@2020}{\textup{2020} Mathematics Subject Classification}
\makeatother
\subjclass[2020]{11D45 (11D41, 11D75) }


\begin{document}

\author[J.D. Lichtman]{Jared Duker Lichtman}
\address{
Department of Mathematics\\ 
Stanford University\\ 
450 Jane Stanford Way\\
Stanford, CA 94305-2125\\
USA}
\email{j.d.lichtman@stanford.edu}

\title[The $abc$ conjecture almost always]{The $abc$ conjecture is true almost always}

\begin{abstract}
Let $\rad(n)$ denote the product of distinct prime factors of an integer $n\ge1$. The celebrated $abc$ conjecture asks whether every solution to the equation $a+b=c$ in triples of coprime integers $(a,b,c)$ must satisfy $\rad(abc) > K_\eps\, c^{1-\eps}$, for some constant $K_\eps>0$. In this expository note, we present a classical estimate of de Bruijn that implies almost all such triples satisfy the $abc$ conjecture, in a precise quantitative sense. Namely, there are at most $O(N^{2/3})$ many triples of coprime integers in a cube $(a,b,c)\in\{1,\ldots,N\}^3$ satisfying $a+b=c$ and $\rad(abc) < c^{1-\eps}$. The proof is elementary and essentially self-contained.

Beyond revisiting a classical argument for its own sake, this exposition is aimed to contextualize a new result of Browning, Lichtman, and Ter\"av\"ainen, who prove a refined estimate $O(N^{33/50})$, giving the first power-savings since 1962.
\end{abstract}

\maketitle

\setcounter{tocdepth}{1}

\section{Introduction}

In this note, we are concerned with solutions to the simple equation $a+b=c$ in triples of integers $(a,b,c)\in \N^3$. First, if $a$ and $b$ are both divisible by a prime $p$, then $c=a+b$ is also divisible by $p$. So we may divide by $p$ to give $\frac{a}{p}+\frac{b}{p}=\frac{c}{p}$. We may continue to divide by shared primes until $a$ and $b$ are coprime integers. Then $c=a+b$ is also coprime to both $a$ and $b$, which we assume hereafter.

For any integer $n\ge1$, we may factor $n=p_1^{a_1}\cdots p_k^{a_1}$ for a unique collection of primes $p_1<\cdots < p_k$ and exponents $a_1,\ldots, a_k\ge1$. Then let $\rad(n)=p_1\cdots p_k$ denote the product of distinct prime factors of $n$. That is, we let
\begin{align*}
\rad(n)=\prod_{p\mid n}p.    
\end{align*}
The celebrated $abc$ conjecture of Masser and Oesterl\'e asserts that for any $\eps>0$ there is a constant $K_\eps>0$ such that every triple $(a,b,c)\in\N^3$ of coprime integers solving the equation $a+b=c$ must also satisfy $\rad(abc) \ > \ K_\eps\, c^{1-\eps}$. Equivalently, only finitely many such solutions to $a+b=c$ may satisfy
\begin{align}\label{eq:abc}
\gcd(a,b)=1\qquad{\rm and}\qquad \rad(abc) < c^{1-\eps}.
\end{align}

The $abc$ conjecture is intrinsically attractive for suggesting that basic addition (from the simple equation $a+b=c$) has a subtle connection to multiplication (from the prime factors of $abc$). Moreover, the $abc$ conjecture is highly coveted for its potential to generalize many celebrated results in number theory, including Fermat's last theorem. See \cite{GT} for a nice exposition on these connections.

The best unconditional progress towards \eqref{eq:abc} is due to Stewart and Yu~\cite{styu}, who showed that only finitely many coprime solutions to $a+b=c$ may satisfy $\rad(abc) < (\log c)^{3-\eps}$. This condition is exponentially stricter than \eqref{eq:abc}. Recently, Pasten~\cite{hector} obtained a new, subexponentially strict condition of $\rad(abc) < \exp((\log\log c)^{2-\eps})$, in the special case when $a<c^{1-\eps}$.

If we count solutions to $a+b=c$ in a cube $(a,b,c)\in\{1,\ldots,N\}^3$, there are $O(N^2)$ many such triples. In fact there are about $\frac{6}{\pi^2} N^2$ such coprime triples.
We may then consider the set of exceptional triples,
\begin{align}
\mathcal E(N) \; = \; \bigg\{(a,b,c)\in \{1,\ldots,N\}^3 \;:\; \gcd(a,b)=1, 
\begin{array}{c}
 a+b=c\\
\rad(abc) \, < \, c^{1-\eps}
\end{array}
\bigg\}.
\end{align}
The $abc$ conjecture is equivalent to the bound $|\mathcal E(N)| \le O_\eps(1)$ for all $N\ge1$. 

In this note, we show that among the $O(N^2)$ many coprime solutions to $a+b=c$ in a cube of length $N$, at most $O(N^{2/3})$ of those solutions also satisfy \eqref{eq:abc}. This makes precise the statement that the $abc$ conjecture is true almost always.

\begin{theorem} \label{thm:deBr}
We have $|\mathcal E(N)| \le O(N^{2/3})$.
\end{theorem}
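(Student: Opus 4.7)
The plan is to use the classical de Bruijn estimate
\begin{equation*}
\Pi(N, y) \;:=\; \#\{n \le N : \rad(n) \le y\} \;\ll_\delta\; y \cdot N^{\delta} \qquad (\delta > 0),
\end{equation*}
together with pigeonhole on the three radicals and partial summation.

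First, pairwise coprimality of $a, b, c$ implies $\rad(abc) = \rad(a)\rad(b)\rad(c)$, so the defining condition of $\mathcal E(N)$ becomes $\rad(a)\rad(b)\rad(c) < c^{1-\eps} \le N^{1-\eps}$. A pigeonhole on the three factors forces at least one of them to be $\le N^{(1-\eps)/3} \le N^{1/3}$. Exploiting the $a \leftrightarrow b$ symmetry, and noting that the case $\rad(c) \le N^{1/3}$ is structurally parallel, I would focus on the case $\rad(a) \le N^{1/3}$.

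Fixing such an $a$, AM--GM on the remaining budget $\rad(b)\rad(c) < N^{1-\eps}/\rad(a)$ forces one of $\rad(b), \rad(c)$ to be $\le y_a := \sqrt{N^{1-\eps}/\rad(a)}$. Since $b$ determines $c = a+b$ and vice versa, the de Bruijn estimate bounds the number of valid pairs $(b, c)$ per admissible $a$ by $y_a \cdot N^{o(1)}$. Summing,
\begin{equation*}
|\mathcal E(N)| \;\ll\; N^{(1-\eps)/2 + o(1)} \sum_{\substack{a \le N \\ \rad(a) \le N^{1/3}}} \frac{1}{\sqrt{\rad(a)}}.
\end{equation*}
Grouping $a$ by $r = \rad(a)$, a partial summation invoking $\Pi(N, t) \ll t \cdot N^{o(1)}$ once more bounds the inner sum by $N^{1/6 + o(1)}$. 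Assembling these pieces yields
\begin{equation*}
|\mathcal E(N)| \;\ll\; N^{2/3 - \eps/2 + o(1)} \;=\; O_\eps(N^{2/3}).
\end{equation*}

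The main obstacle is securing the de Bruijn estimate $\Pi(N, y) \ll y \cdot N^{o(1)}$: a subexponential control on integers with anomalously small radical. Given this classical input, the rest of the argument -- pigeonhole to shrink one of the three radicals to size $N^{1/3}$, AM--GM to split the remaining multiplicative budget between the other two, and partial summation to balance the $\rad(a)$-dependence -- is entirely elementary.
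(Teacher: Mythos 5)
Your proof is correct, and it takes a genuinely different route from the paper's. The paper avoids your double pigeonhole and partial summation via a single slicker step: it observes the identity
\[
\rad(ab)\,\rad(ac)\,\rad(bc) = \rad(abc)^2 < c^{2-2\eps},
\]
so that \emph{some pairwise product} $xy\in\{ab,ac,bc\}$ has $\rad(xy)\le c^{2/3-\eps}$. Treating $n=xy\le N^2$ as a single integer with small radical, the paper applies the divisor bound $\tau(n)\le N^{o(1)}$ to account for the at most $\tau(n)$ ways of writing $n$ as an ordered product $xy$, and concludes with one application of the radical estimate (Corollary~\ref{cor:rad}) at scale $N^2$. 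By contrast, you pigeonhole on the three \emph{individual} radicals to pin down, say, $\rad(a)\le N^{1/3}$, then pigeonhole a second time on $\rad(b),\rad(c)$ within the remaining budget $N^{1-\eps}/\rad(a)$, using the linear relation $c=a+b$ to reduce to counting a single variable; you then sum over $a$ via partial summation against $\Pi(N,t)\ll t\,N^{o(1)}$. Both approaches rely on exactly the same classical input (the de Bruijn radical estimate, Lemma~\ref{lem:radical}/Corollary~\ref{cor:rad}) and both land on the same exponent $\tfrac{2}{3}-\Theta(\eps)+o(1)$; the paper's version trades your nested pigeonhole and Abel summation for the divisor bound and the pairwise-product identity, which keeps the argument to a half-page. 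Your variant has the modest advantage of working directly with the three single radicals at scale $N$ rather than with a product at scale $N^2$, which some may find more transparent.
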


The proof of Theorem \ref{thm:deBr} is elementary and essentially self-contained. The main argument is a half-page (see p. 4), assuming a classical estimate on radicals, which itself has a one page. Namely, Lemma \ref{lem:radical} (see p. 3) shows that very few integers $n\le N$ can share a common radical $r=\rad(n)$. To the author's knowledge, Theorem \ref{thm:deBr} has not appeared in the literature (prior to \cite{br}), though it is folklore to experts.

Beyond presenting this elegant argument for its own sake, our exposition is aimed to motivate a 2024 result of Browning, Lichtman, and Ter\"av\"ainen \cite{br}.

\begin{theorem}[\cite{br}] \label{thm:power}
We have $|\mathcal E(N)| \le O(N^{33/50})$.
\end{theorem}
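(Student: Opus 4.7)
The plan is to refine the argument of Theorem \ref{thm:deBr} by exploiting the joint multiplicative structure of all three terms $a$, $b$, $c$, rather than just counting admissible values of $c$. The key observation is that the condition $\rad(abc) < c^{1-\eps}$ combined with pairwise coprimality of $a,b,c$ factors as $\rad(a)\,\rad(b)\,\rad(c) < c^{1-\eps}$, so at least one of the three radicals must be substantially smaller than $N^{1/3}$. This product structure is the lever that, properly exploited, should yield an exponent strictly below $2/3$.

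First, I would perform a dyadic decomposition of $\mathcal E(N)$ according to the sizes $\rad(a)\asymp R_a$, $\rad(b)\asymp R_b$, $\rad(c)\asymp R_c$, with $R_aR_bR_c \ll N^{1-\eps}$. I would then split into two regimes along a threshold $N^\alpha$, to be optimized. In the \emph{unbalanced regime}, where (after symmetry) $R_c\le N^\alpha$, I would apply Lemma \ref{lem:radical} directly to count the admissible values of $c$ with $\rad(c)\asymp R_c$, noting that each such $c$ admits $N^{o(1)}$ representations $c=a+b$ compatible with the prescribed radicals of $a$ and $b$. Balancing $\alpha$ variable-by-variable recovers the $N^{2/3}$ bound of Theorem \ref{thm:deBr}.

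In the complementary \emph{balanced regime}, all three of $R_a,R_b,R_c$ exceed $N^\alpha$, which together with the product constraint forces each below $N^{(1-\alpha)/2}$. Here a purely variable-by-variable count is too weak and a genuinely new input is needed. A natural candidate is a determinant method in the spirit of Bombieri--Pila: for fixed radicals $r_a,r_b,r_c$, the integer solutions of $a+b=c$ with $\rad(a)\mid r_a$, $\rad(b)\mid r_b$, $\rad(c)\mid r_c$ live on a line intersected with a powerful lattice whose density one can hope to bound by $\ll (R_aR_bR_c)^{1/2+o(1)}$, or better under an averaging argument. An alternative is an additive-energy bound for integers of bounded radical, obtained via a large sieve or an $L^4$-estimate for an associated Dirichlet polynomial, which translates directly into a count of triples $a+b=c$ once summed against the density of admissible radicals.

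The main obstacle is precisely the balanced case: the $2/3$ bound is essentially tight when one treats the problem one variable at a time, and any saving must come from a joint-counting step with genuine power-saving. I expect the numerology $33/50$ to emerge from carefully balancing the gain of the new input against the loss incurred by enlarging the unbalanced regime, with the constraint $R_aR_bR_c\ll N^{1-\eps}$ dictating the breakpoints. A secondary technical difficulty is controlling the $N^{o(1)}$ divisor-type losses from dyadic summation over $R_a,R_b,R_c$, so that the power-saving in the balanced regime is not absorbed by these factors; this typically requires a slightly refined form of Lemma \ref{lem:radical} tracking the multiplicative structure of $r$ rather than just the count of smooth numbers below $N$.
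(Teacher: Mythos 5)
This paper does not actually prove Theorem~\ref{thm:power}; it states it and cites \cite{br}, remarking only that the proof there runs over 15 pages and draws on Fourier analysis, Thue equations, geometry of numbers, and determinant methods. So there is no in-paper argument to compare against, and your sketch can only be judged on its own coherence and against that description. Your opening moves are sound and do match the natural starting point: factor $\rad(abc)=\rad(a)\rad(b)\rad(c)$ by coprimality, decompose dyadically in $R_a,R_b,R_c$ with $R_aR_bR_c\ll N^{1-\eps}$, and correctly identify the balanced regime $R_a\approx R_b\approx R_c\approx N^{1/3}$ as the bottleneck where the variable-by-variable count saturates at $N^{2/3}$. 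That diagnosis is right. But past it the proposal stops being a proof: the entire saving must come from the balanced case, and there you offer two candidate tools with no concrete bound for either, and you say explicitly that you cannot derive the exponent $33/50$. Since the theorem \emph{is} the claim that such a power saving exists, this is the gap, not a detail to be filled in later.

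The one concrete-sounding suggestion you make is also not right as stated. Integers $n$ with $\rad(n)\mid r$ do not form a lattice, nor anything close to one — they are a sparse multiplicatively structured set — so modeling the problem as ``a line intersected with a powerful lattice'' of density $\ll (R_aR_bR_c)^{1/2+o(1)}$ has no justification. The actual argument in \cite{br} instead extracts an $n$-th power structure from the small-radical variables to land on Thue equations of the shape $ax^n-by^n=c$, which have very few solutions, and combines this with a determinant-method count on the resulting curves and a geometry-of-numbers treatment of the coefficient lattices; Thue equations and geometry of numbers are absent from your sketch, and both appear to be essential to getting below $2/3$. The Fourier/additive-energy alternative you float has the same problem: you would need a nontrivial additive-energy bound for integers of small radical, which is itself a substantial unresolved input and is not supplied.
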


Theorem \ref{thm:power} refines Theorem \ref{thm:deBr}, reducing the exponent $\frac{2}{3}=0.666\cdots$ down to $\frac{33}{50}=0.66$.
However, unlike the half-page elementary proof of Theorem \ref{thm:deBr}, the proof of Theorem \ref{thm:power} is over 15 pages, and itself relies on a collection of more sophisticated techniques across the literature---namely, Fourier analysis, Thue equations, geometry of numbers, and determinant methods (see \cite{bl} for an accessible introduction). These techniques are used to count integer points on varieties. We hope this note serves as a friendly entry-point, and motivation for further reading on these subjects.

\section*{Notation}

Here $f(n) = O(g(n))$ means that $|f(n)| \le C\,g(n)$ for all $n>0$ where $C>0$ is an absolute constant; while $f(n) = O_\eps(g(n))$ means $|f(n)| \le C\,g(n)$ for some $C=C_\eps>0$ depending on $\eps$. $f(n) = o(1)$ means that $\lim_{n\to\infty}f(n) = 0$. Finally, $\tau(n)$ denotes the number of divisors of an integer $n\ge1$.

\section{Divisor bound and radical estimate}

We begin with the classical divisor bound, which is one of the most useful and versatile estimates in number theory. We provide its proof for completeness in Lemma \ref{lem:div} below. Moreover, its proof will foreshadow that of Lemma \ref{lem:radical}, for the main radical estimate.

\begin{lemma}[Divisor bound] \label{lem:div}
We have $\tau(n) \le n^{o(1)}$ for any integer $n\ge1$.
\end{lemma}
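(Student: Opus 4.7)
The plan is to prove the equivalent quantitative statement that for every $\eps>0$ there exists a constant $C_\eps>0$ with $\tau(n)\le C_\eps\,n^\eps$ for all $n\ge 1$; this immediately gives $\log\tau(n)/\log n\le \eps+o(1)$ as $n\to\infty$, and letting $\eps\to 0$ then yields $\tau(n)=n^{o(1)}$. The starting point is the multiplicativity of both $\tau$ and $n\mapsto n^{-\eps}$: writing the prime factorization $n=\prod_p p^{a_p}$ with $a_p\ge 0$, one has
\[
\frac{\tau(n)}{n^\eps} \;=\; \prod_{p\mid n}\frac{a_p+1}{p^{\eps a_p}},
\]
so it suffices to bound each local factor $(a_p+1)/p^{\eps a_p}$ separately.

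The argument I would run splits the primes at the threshold $p_\eps := 2^{1/\eps}$, above which $p^\eps\ge 2$. For any prime $p\ge p_\eps$, the elementary inequality $2^{a}\ge a+1$ for $a\ge 0$ (immediate by induction) gives $p^{\eps a_p}\ge 2^{a_p}\ge a_p+1$, so the corresponding factor is at most $1$ and the large primes contribute nothing to the product. For the finitely many primes $p<p_\eps$, the function $a\mapsto (a+1)p^{-\eps a}$ has linear numerator and exponential denominator, hence attains a finite maximum $M_{p,\eps}<\infty$ on $a\ge 0$. Setting $C_\eps := \prod_{p<p_\eps}M_{p,\eps} < \infty$ (a finite product over the primes below $p_\eps$) yields $\tau(n)\le C_\eps\,n^\eps$ for every $n\ge 1$, as desired.

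There is no real obstacle; the only subtlety is recognizing that a uniform per-prime bound is unavailable for small primes (for instance $(a+1)/2^{\eps a}$ is of size $\asymp 1/\eps$ at $a\sim 1/(\eps\log 2)$, hence unbounded as $\eps\to 0$), so these contributions must be absorbed into a single $\eps$-dependent constant rather than a universal one. This ``per-prime'' accounting, pairing a simple arithmetic function of $a_p$ against an $\eps$-fraction of the prime power $p^{a_p}\mid n$, is exactly the structure that will reappear in the radical estimate of Lemma~\ref{lem:radical}, where the divisor count $a_p+1$ is replaced by the indicator value $1$.
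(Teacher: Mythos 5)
Your proof is correct and follows essentially the same route as the paper: factor $\tau(n)/n^\eps$ as a product over primes, split at the threshold $p^\eps \ge 2$, bound the large-prime factors by $1$ via $2^{a}\ge a+1$, and absorb the finitely many small-prime factors into an $\eps$-dependent constant. The only cosmetic difference is that the paper makes the small-prime bound explicit (each factor is at most $2/\eps$, giving $C_\eps = (2/\eps)^{2^{1/\eps}}$), whereas you simply observe that each local maximum is finite.
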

\begin{proof}
We factor $n = p_1^{a_1}\cdots p_k^{a_k}$ into primes, so that $\tau(n) = (a_1+1)\cdots(a_k+1)$. So for any $0<\eps<1$, we have
\begin{align}\label{eq:taunn}
\frac{\tau(n)}{n^\eps} = \prod_{i\le k}\frac{a_i+1}{p_i^{a_i \eps}}.
\end{align}
If $p_i^{\eps}\ge 2$ then $\frac{a_i+1}{p_i^{a_i \eps}}\le \frac{a_i+1}{2^{a_i}}\le 1$. Otherwise $p_i<2^{1/\eps}$, in which case
\begin{align*}
\frac{a_i+1}{p_i^{a_i \eps}} \le \frac{a_i+1}{2^{a_i \eps}} \le \frac{a_i+1}{a_i \eps/2+1} \le \frac{2}{\eps},
\end{align*}
using $p_i\ge2$ and $2^x \ge x/2+1$ for all $x\ge0$. Thus \eqref{eq:taunn} becomes
\begin{align*}
\frac{\tau(n)}{n^\eps} \le \prod_{p<2^{1/\eps}}\frac{2}{\eps} \le (2/\eps)^{2^{1/\eps}}.
\end{align*}
In particular $\tau(n) \le O_\eps(n^\eps)$. Taking $\eps\to 0$ implies $\tau(n)\le n^{o(1)}$ as desired.
\end{proof}

The following radical estimate shows that there can be at most $N^{o(1)}$ many integers $n\le N$ with a common radical $r=\rad(n)$.

\begin{lemma} \label{lem:radical}
Let $r\le N$ and let $\mathcal{R}=\mathcal{R}(r,N)= \{ n\le N : \rad(n)=r \}$. Then we have
\[
|\mathcal{R}| \le N^{o(1)}.
\]
\end{lemma}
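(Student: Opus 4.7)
My plan is to closely mimic the proof of Lemma \ref{lem:div}, using a Dirichlet-series style trick to trade a sum over $\mathcal{R}$ against a power $N^\eps$.

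First I would parametrize $\mathcal{R}$: factor $r=p_1\cdots p_k$ into its distinct prime factors, so every $n$ with $\rad(n)=r$ is of the form $n=p_1^{a_1}\cdots p_k^{a_k}$ with each $a_i\geq 1$. Then, for any fixed $\eps\in(0,1)$, I would use the trivial inequality
\[
|\mathcal{R}| \;\leq\; N^\eps \sum_{n\in\mathcal{R}} n^{-\eps} \;\leq\; N^\eps \sum_{\substack{n\geq 1 \\ \rad(n)=r}} n^{-\eps},
\]
dropping the upper bound $n\leq N$ and factoring the resulting sum as an Euler product
\[
\sum_{\substack{n\geq 1 \\ \rad(n)=r}} n^{-\eps} \;=\; \prod_{i=1}^k \sum_{a\geq 1} p_i^{-a\eps} \;=\; \prod_{i=1}^k \frac{1}{p_i^{\eps}-1}.
\]

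Next, I would split this product at the threshold $p_i^\eps=2$, exactly as in the proof of Lemma \ref{lem:div}. For primes $p_i\geq 2^{1/\eps}$ the factor $\tfrac{1}{p_i^\eps-1}\leq 1$ contributes nothing. For the remaining primes $p_i<2^{1/\eps}$, I would apply the elementary inequality $p_i^{\eps}\geq 1+\eps\log p_i\geq 1+\eps\log 2$ (from $e^x\geq 1+x$), bounding each factor by $\tfrac{1}{\eps\log 2}$. Since there are only $\pi(2^{1/\eps})=O_\eps(1)$ such small primes, their combined contribution is a constant $C_\eps$ depending only on $\eps$.

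Putting this together would give $|\mathcal{R}|\leq C_\eps N^\eps$ for every $\eps\in(0,1)$, and letting $\eps\to 0$ yields $|\mathcal{R}|\leq N^{o(1)}$ as required. I do not anticipate any serious obstacle: the argument is essentially a geometric series computation, and the only subtlety is to treat the small primes uniformly in $r$, which is handled by the same truncation strategy already used in Lemma \ref{lem:div}. The key conceptual point is that dropping the truncation $n\leq N$ is harmless, because the Euler product is absolutely convergent for any positive exponent $\eps$.
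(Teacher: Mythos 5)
Your proposal is correct and follows essentially the same approach as the paper: parametrize $\mathcal{R}$ by exponent vectors, bound $|\mathcal{R}|\le N^\eps\sum_{n\in\mathcal R}n^{-\eps}$, factor the untruncated sum as the Euler product $\prod_{i\le k}(p_i^\eps-1)^{-1}$, and split the primes at the threshold $p_i^\eps=2$. The only cosmetic difference is that you bound the small-prime factors via $e^x\ge 1+x$ (giving $1/(\eps\log 2)$) whereas the paper uses $2^x\ge x/2+1$ (giving $2/\eps$); both yield an $O_\eps(1)$ constant and hence $|\mathcal{R}|\le O_\eps(N^\eps)$, from which $N^{o(1)}$ follows.
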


\begin{proof}
We factor $r = p_1 \cdots p_k$
into distinct primes $p_1<\cdots<p_k$. Then the integers $n$ with $\rad(n)=r$
are of the form $n = p_1^{a_1}p_2^{a_2}\cdots p_k^{a_k}$ for any exponents $a_1,\dots,a_k\ge 1$. That is,
\[
\mathcal{R}:= \Bigl\{ n\le N : \rad(n)=r \Bigr\} \subset \Bigl\{\, p_1^{a_1}\cdots p_k^{a_k} : a_1,\dots,a_k\ge1\,\Bigr\}.
\]

For any $\eps>0$, we have $|\mathcal R| \le \sum_{n\in \mathcal{R}}(N/n)^\eps$. Then we may factor the Dirichlet series as an Euler product,
\begin{align}\label{eq:Eulerprod}
\sum_{n\in \mathcal{R}}\frac{1}{n^{\eps}}
\le \sum_{a_1\ge1}\cdots\sum_{a_k\ge1} \frac{1}{(p_1^{a_1}\cdots p_k^{a_k})^\eps}
=\prod_{i\le k} \Bigl(\sum_{a_i\ge1} \frac{1}{p_i^{\eps a_i}}\Bigr)
=\prod_{i\le k} \frac{1}{p_i^\eps-1}.
\end{align}

If $p_i^{\eps}\ge 2$ then $\frac{1}{p_i^{\eps}-1}\le 1$. Otherwise $p_i<2^{1/\eps}$, in which case
\begin{align*}
\frac{1}{p_i^{\eps}-1} \le \frac{1}{2^{\eps}-1} \le \frac{2}{\eps},
\end{align*}
using $p_i\ge2$ and $2^x \ge x/2+1$ for all $x\ge0$. Thus \eqref{eq:Eulerprod} becomes
\begin{align*}
\frac{|\mathcal{R}|}{N^\eps} \le \sum_{n\in \mathcal{R}}\frac{1}{n^{\eps}} \le \prod_{p<2^{1/\eps}}\frac{2}{\eps} \le (2/\eps)^{2^{1/\eps}}.
\end{align*}
In particular $|\mathcal{R}| \le O_\eps(N^\eps)$. Taking $\eps\to 0$ implies $|\mathcal{R}|\le N^{o(1)}$ as desired. 
\end{proof}

As a direct consequence of Lemma \ref{lem:radical} we deduce:

\begin{corollary}\label{cor:rad}
For any $0\le\lambda\le 1$, we have
\begin{align*}
\big|\big\{n\le N : \rad(n) \le N^{\lambda}\big\}\big| \ = \ N^{\lambda+o(1)}.
\end{align*}
\end{corollary}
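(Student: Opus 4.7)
The plan is to establish matching upper and lower bounds of the form $N^{\lambda+o(1)}$, neither of which requires new ideas beyond Lemma \ref{lem:radical}.

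For the upper bound, I would partition the set $\{n\le N:\rad(n)\le N^\lambda\}$ according to the value $r=\rad(n)$. Note that $r$ must itself be squarefree (by definition of the radical) and satisfy $r\le N^\lambda\le N$, so $r$ ranges over at most $N^\lambda$ integers. For each such $r$, Lemma \ref{lem:radical} applies (since $r\le N$) and gives $|\mathcal{R}(r,N)|\le N^{o(1)}$. Summing over the at most $N^\lambda$ admissible radicals yields
\[
\big|\{n\le N:\rad(n)\le N^\lambda\}\big|\;\le\;\sum_{r\le N^\lambda}|\mathcal{R}(r,N)|\;\le\;N^\lambda\cdot N^{o(1)}\;=\;N^{\lambda+o(1)}.
\]

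For the lower bound, I would simply note that every squarefree integer $m\le N^\lambda$ is itself an element of the set, since $\rad(m)=m\le N^\lambda$ and $m\le N$. A standard sieve (or the familiar identity $\sum_{d^2\mid m}\mu(d)=\mathbf{1}_{m\,\text{squarefree}}$) gives that the number of squarefree integers up to $X$ is $\frac{6}{\pi^2}X+O(\sqrt{X})$, so there are $\gg N^\lambda$ such $m$. This gives a lower bound of order $N^\lambda$, which is certainly $\ge N^{\lambda-o(1)}$.

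Combining the two bounds yields the asserted $N^{\lambda+o(1)}$. There is no genuine obstacle: the only mild subtlety is recognising that every radical is squarefree, which trims the outer sum from $N$ terms to the optimal $N^\lambda$ terms and is essential for matching the lower bound. (The degenerate case $\lambda=0$ is handled trivially, since only $n=1$ satisfies $\rad(n)\le 1$, and $1=N^{0+o(1)}$.)
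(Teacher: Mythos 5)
Your upper bound argument is identical in substance to the paper's proof: decompose the set by the value of $r=\rad(n)$, apply Lemma~\ref{lem:radical} to each fiber $\mathcal{R}(r,N)$, and sum over the at most $N^\lambda$ admissible radicals. Where you go beyond the paper is in supplying the matching lower bound: the paper's proof, as written, only establishes $|\{n\le N:\rad(n)\le N^\lambda\}|\le N^{\lambda+o(1)}$ even though the statement asserts an equality. Your observation that every squarefree $m\le N^\lambda$ lies in the set (since $\rad(m)=m$) and that there are $\gg N^\lambda$ such $m$ is the clean way to close that gap, so your proof is in fact slightly more complete than the paper's. (In fairness, only the upper bound is used in the proof of Theorem~\ref{thm:deBr}, which likely explains the omission; and one could get away with the even cheaper lower bound by observing that all primes $p\le N^\lambda$ lie in the set.) Your remark that admissible radicals are squarefree is correct but not load-bearing for the upper bound: the crude estimate that there are at most $N^\lambda$ positive integers $r\le N^\lambda$ already suffices, exactly as in the paper.
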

\begin{proof}
We split into all possible radicals $r\le N^\lambda$, giving
\begin{align*}
\big|\big\{n\le N : \rad(n) \le N^{\lambda}\big\}\big| &= \sum_{r\le N^{\lambda}}|\mathcal R(r,N)| 
\le \sum_{r\le N^{\lambda}}N^{o(1)}= N^{\lambda+o(1)},
\end{align*}
by Lemma \ref{lem:radical}. This completes the proof.
\end{proof}
\begin{remark}
We may optimize Lemma \ref{lem:radical} to show $|\mathcal R(r,N)| \le N^{O(1/\log\log N)}$, taking $\eps = O(1/\log\log N)$.
Similarly optimizing Lemma \ref{lem:div} gives $\tau(n) \le n^{O(1/\log\log n)}$.
\end{remark}

In 1962, de Bruijn \cite{debruijn} obtained the above bound $N^{\lambda+o(1)}$ in the sharper quantitative form $N^{\lambda + O(1/\sqrt{\log N\log\log N})}$. Interestingly, Robert, Stewart and Tenenbaum~\cite{tenenbaum} have proposed a refined $abc$ conjecture, expressing the inequality $\rad(abc) < c^{1-\eps}$ from \eqref{eq:abc} in the quantitative form $\rad(abc) < c^{1-O(1/\sqrt{\log c\log\log c})}$.

\section{Proof of Theorem \ref{thm:deBr}}
Take an exceptional triple $(a,b,c)\in \mathcal E(N)$. That is, $a,b,c$ are coprime integers satisfying $a+b=c$ and $\rad(abc)< c^{1-\eps}$.
Observe that
\begin{align*}
\rad(abc)=\prod_{p\mid abc}p = \Big(\prod_{p\mid a}p\Big)\Big(\prod_{p\mid b}p\Big)\Big(\prod_{p\mid c}p\Big)
=\rad(a)\rad(b)\rad(c),
\end{align*}
since $\rad(n)$ is a multiplicative function. In particular,
\begin{align*}
\rad(ab)\rad(ac)\rad(bc) = \rad(abc)^2 < c^{2-2\eps}.
\end{align*}
Thus there is some choice of $xy\in\{ab,ac,bc\}$ that satisfies
\begin{align*}
\rad(xy)\le c^{\frac23-\eps}.
\end{align*}
So we have
\begin{align*}
|\mathcal E(N)| = \underset{\substack{1\le a,b,c\le N\\ \gcd(a,b)=1\\ \rad(abc) < c^{1-\eps} \\ a+b=c}}{\sum\sum\sum} 1
\ \le \ 3\sum_{r\le N^{2/3-\eps}}\underset{\substack{1\le x,y\le N\\ \gcd(x,y)=1\\r=\rad(xy)}}{\sum\sum} 1 
\ \le \ 3\sum_{r\le N^{2/3-\eps}}\sum_{\substack{1\le n\le N^2\\r=\rad(n)}} \tau(n).
\end{align*}
The divisor bound in Lemma \ref{lem:div} gives $\tau(n)\le N^{o(1)}$, and so 
\begin{align}
|\mathcal E(N)| \ \le \ \sum_{r\le N^{2/3-\eps}}
      \sum_{\substack{n\le N^{2}\\ \rad(n)=r}}N^{o(1)}\le \sum_{r\le N^{2/3-\eps}}N^{o(1)}
      \le N^{\frac23-\eps+o(1)}
\end{align}
by Corollary \ref{cor:rad}. Hence $|\mathcal E(N)| \le O(N^{\frac{2}{3}})$.
This completes the proof of Theorem \ref{thm:deBr}.

\end{document}